\documentclass[12pt,reqno]{amsart}
\usepackage[left=1in,right=1in,top=1in,bottom=1in]{geometry}
\usepackage{amssymb}
\usepackage{amsmath}
\usepackage{mathrsfs}
\usepackage{amsfonts}
\usepackage{hyperref}
\usepackage{xcolor}
\usepackage{bbm}
\usepackage{tikz}
\usepackage{array}
\usepackage{multirow}
\usepackage{tikz}
\usepackage{ytableau}
\usepackage{thmtools}
\usepackage{thm-restate}
\usepackage{varwidth}
\usepackage{comment}
\usepackage{mathtools}
\usepackage[mathscr]{euscript}
\usepackage{bbm}
\usepackage{multicol}
\usepackage{enumerate}
\usepackage{graphicx} 
\theoremstyle{definition}
\newtheorem{thm}{Theorem}[section]
\newtheorem{lem}[thm]{Lemma}
\newtheorem*{lem*}{Lemma}
\newtheorem*{thm*}{Theorem}
\newtheorem*{cor*}{Corollary}
\newtheorem{prop}[thm]{Proposition}

\newtheorem{cor}[thm]{Corollary}

\newtheorem{defn}[thm]{Definition}
\newtheorem*{remark*}{Remark}

\newtheorem{cor/defn}[thm]
{Corollary/Definition}

\DeclareMathOperator{\Par}{\mathbb{Y}}

\DeclareMathOperator{\Hom}{\mathrm{Hom}}
\DeclareMathOperator{\fX}{\mathfrak{X}}

\usepackage[
    backend=bibtex,
    maxnames=5,
    style=alphabetic
  ]{biblatex}
\addbibresource{main.bib}

\title{Wreath Generalization of Littlewood Reciprocity}

\author{Milo Bechtloff Weising}
\address{Department of Mathematics (0123),
460 McBryde Hall, Virginia Tech,
225 Stanger Street,
Blacksburg, VA 24061-1026}
\email{milojbw@vt.edu}

\date{\today}

\begin{document}

\maketitle 

\begin{abstract}
    Given any $m$-dimensional complex representation $\eta$ of a finite group $G$ and any highest weight representation $V^{\lambda}$ of $\mathrm{GL}_{nm}(\mathbb{C})$ we may define an action of $G^n \rtimes \mathfrak{S}_n$ on $V^{\lambda}$ using the embedding $\mathrm{GL}_{m}(\mathbb{C})^n \rtimes \mathfrak{S}_n \leq \mathrm{GL}_{nm}(\mathbb{C})$ and $\eta: G \rightarrow \mathrm{GL}_m(\mathbb{C})$. We derive a branching rule for the multiplicities of irreducible $G^n \rtimes \mathfrak{S}_n$ representations in $V^{\lambda}.$ The formula generalizes Littlewood's reciprocity rule for branching between $\mathrm{GL}_n(\mathbb{C})$ and the symmetric group of permutation matrices $\mathfrak{S}_n \leq \mathrm{GL}_n(\mathbb{C}).$
  
\end{abstract}

\section{Introduction}

The \textit{\textbf{restriction problem}} is the long-standing open problem in algebraic combinatorics which asks for a combinatorial formula for the \textit{\textbf{branching coefficients}} from $\mathrm{GL}_n(\mathbb{C})$ to $\mathfrak{S}_n$ embedded as the group of permutation matrices. More concretely, given a \textit{\textbf{highest weight representation}} $V^{\lambda}$ of $\mathrm{GL}_n(\mathbb{C})$ we may restrict to $\mathfrak{S}_n$ and try to describe the expansion of $\mathrm{Res}^{\mathrm{GL}_n(\mathbb{C})}_{\mathfrak{S}_n} V^{\lambda}$ into the irreducible \textit{\textbf{Specht modules}} $S^{\mu}$ of $\mathfrak{S}_n.$ We refer the reader to Orellana--Zabrocki's paper \cite{OZ21} for an overview of the current state of this problem. Littlewood \cite{Littlewood58} made significant progress towards this goal by establishing an algebraic formula for these coefficients involving \textit{\textbf{Schur symmetric functions}} $s_{\lambda}(X)$, the \textit{\textbf{Hall pairing}} $\langle -, - \rangle$, and \textit{\textbf{plethystic substitutions}}. The formula may be described as follows:

\begin{thm*}[Littlewood]\cite{Littlewood58}
    For $\lambda,\mu \in \Par$ with $\ell(\lambda) \leq |\mu| = n$, 
    $$\mathrm{dim}_{\mathbb{C}} \mathrm{Hom}_{\mathfrak{S}_{n}}\left( S^{\mu}, \mathrm{Res}^{\mathrm{GL}_n(\mathbb{C})}_{\mathfrak{S}_n} V^{\lambda} \right) = \left\langle s_{\mu}\left( \sum_{k\geq 0} h_k \right), ~ s_{\lambda} \right\rangle.$$
\end{thm*}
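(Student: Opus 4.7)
The plan is to compute the Frobenius characteristic of $\mathrm{Res}^{\mathrm{GL}_n(\mathbb{C})}_{\mathfrak{S}_n}V^{\lambda}$ and identify it with $s_{\mu}[H]$ via a plethystic Cauchy identity, where $H=\sum_{k\ge 0}h_k$.

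First, by the Weyl character formula, $\chi^{V^{\lambda}}(g)$ equals $s_{\lambda}$ evaluated at the eigenvalues of $g$. A permutation matrix of cycle type $\nu=(\nu_1,\nu_2,\dots)$ has eigenvalues forming the disjoint union, over the cycles $C$ of $\sigma$, of the $|C|$-th roots of unity; writing $\varepsilon_\nu$ for this plethystic alphabet I obtain $\chi^{V^{\lambda}}(\sigma_\nu)=s_{\lambda}[\varepsilon_\nu]$. Applying the classical Cauchy identity $\sum_\lambda s_\lambda(X)s_\lambda(Y)=\prod_{i,j}(1-x_iy_j)^{-1}$ with $Y=\varepsilon_\nu$ and exploiting $\prod_{j=0}^{k-1}(1-x\zeta_k^{j})=1-x^k$ collapses the product side to
$$\sum_\lambda s_\lambda(X)\,s_\lambda[\varepsilon_\nu]=\prod_{j\ge 1}\bigl(p_j[H(X)]\bigr)^{m_j(\nu)},$$
where $m_j(\nu)$ is the multiplicity of $j$ in $\nu$.

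Next I would sum this identity against $p_\nu(Y)/z_\nu$ over all partitions $\nu$. The standard cycle-index factorization $\sum_\nu p_\nu/z_\nu=\exp(\sum_j p_j/j)$ telescopes the result to
$$\sum_{\lambda}s_{\lambda}(X)\sum_{n\ge 0}\mathrm{ch}\bigl(\chi^{V^{\lambda}}|_{\mathfrak{S}_n}\bigr)(Y)=\exp\!\left(\sum_{j\ge 1}\frac{p_j(Y)\,p_j[H(X)]}{j}\right)=\Omega\bigl[Y\cdot H(X)\bigr],$$
where on the left the $Y$-degree $n$ isolates the $\mathfrak{S}_n$-piece. Finally, invoking the plethystic Cauchy identity $\Omega[Yf]=\sum_\mu s_\mu(Y)\,s_\mu[f]$ and pairing with $s_\mu(Y)$ in the Hall inner product on the $Y$-variables (which automatically selects the $Y$-degree $|\mu|=n$ piece) yields
$$\sum_{\lambda}\dim\mathrm{Hom}_{\mathfrak{S}_n}\bigl(S^{\mu},\mathrm{Res}V^{\lambda}\bigr)\,s_{\lambda}(X)=s_{\mu}[H](X),$$
and a final Hall pairing with $s_{\lambda}(X)$ delivers the claimed formula.

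The main obstacle is to work rigorously in the completion $\widehat{\Lambda}$ of the ring of symmetric functions containing the unbounded-degree series $H$: one must verify that each Schur coefficient of $s_\mu[H]$ is a finite quantity and that the extended Cauchy identity $\Omega[Yf]=\sum_\mu s_\mu(Y)\,s_\mu[f]$ holds termwise in each Schur-degree when $f=H(X)$. Both reduce to standard Hopf-algebraic identities on power sums once degrees are carefully tracked, and the hypothesis $\ell(\lambda)\le n$ ensures that $V^{\lambda}$ is a well-defined polynomial representation of $\mathrm{GL}_n(\mathbb{C})$ matching the indexing in the Cauchy identity.
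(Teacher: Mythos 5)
Your argument is correct, and it is essentially the $m=1$ specialization of the paper's proof of its main theorem: your product formula for $\sum_\lambda s_\lambda(X)s_\lambda[\varepsilon_\nu]$ is Proposition \ref{sub kernel prop}, your generating function $\Omega[Y\cdot H(X)]$ is Proposition \ref{plethystic kernel formula prop}, and your final pairing with $s_\mu(Y)$ is the reproducing-kernel step of Lemma \ref{wreath kernel lem} and Proposition \ref{plethystic formula for irrep sub prop}. No substantive differences.
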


We will generalize this formula to compute the branching coefficients from $\mathrm{GL}_n(\mathbb{C})$ to $G^n \rtimes \mathfrak{S}_n$ where $m \geq 1$ is arbitrary along any representation $\eta: G \rightarrow \mathrm{GL}_m(\mathbb{C})$ which we assume to be unitary without loss of generality. The irreducible representations of $G^n \rtimes \mathfrak{S}_n$, $W_{\rho}$, are indexed by partition valued functions $\rho:G_* \rightarrow \Par$ with $|\rho| = \sum_{c \in G_*} |\rho(c)| = n$ where $G_*$ is the set of equivalence classes of irreducible complex representations of $G$. For $n\geq 0$, we will write $\eta^{(n)}:G^n \rtimes \mathfrak{S}_n \rightarrow \mathrm{U}(m)^{n} \rtimes \mathfrak{S}_n$ for the natural lift of $\eta$ to $G^n \rtimes \mathfrak{S}_n.$ The formula is as follows:

\begin{thm*}
For $\rho: G_* \rightarrow \Par$ with $|\rho| = n$, an $m$-dimensional unitary representation $\eta: G \rightarrow \mathrm{U}(m)$, and $\lambda \in \Par$ with $\ell(\lambda) \leq n m ,$
    $$\mathrm{dim}_{\mathbb{C}} \mathrm{Hom}_{G^{n} \rtimes \mathfrak{S}_n}\left( W_{\rho}, \eta^{(n)}_*\mathrm{Res}^{\mathrm{GL}_{nm}(\mathbb{C})}_{\mathrm{U}(m)^{n} \rtimes \mathfrak{S}_n} V^{\lambda} \right)= \left\langle \prod_{\gamma \in G_*} s_{\rho(\gamma)}\left(\sum_{\mu \in \Par} \dim \Hom_{G}(\gamma, \mathbb{S}_{\mu}(\eta)) s_{\mu}\right), ~ s_{\lambda} \right\rangle.$$
\end{thm*}

Let $m \geq 1$, fix $\zeta_m:= e^{\frac{2 \pi i}{m}}$ to be a primitive complex $m$-th root of unity, and let $G = \mu_m$ be the cyclic group of $m$-th roots of unity in $\mathbb{C}^*$. We may think of irreducible characters of $\mu_m$ as being in natural bijection with $\mu_m$ itself, i.e., $(\mu_m)_* \equiv \mu_m.$ We apply Theorem \ref{main theorem} to the case when $G = \mu_m$ to obtain Corollary \ref{cyclic cor}:

\begin{cor*}
For $\rho: \mu_m \rightarrow \Par$ and $\lambda \in \Par$ with $\ell(\lambda) \leq |\rho|= n,$
    $$\mathrm{dim}_{\mathbb{C}} \mathrm{Hom}_{(\mu_m)^{n} \rtimes \mathfrak{S}_n}\left( W_{\rho}, \mathrm{Res}^{\mathrm{GL}_n(\mathbb{C})}_{(\mu_m)^{n} \rtimes \mathfrak{S}_n} V^{\lambda} \right) = \left\langle \prod_{0\leq j \leq m-1} s_{\rho(\zeta_m^j)}\left( \sum_{k \geq 0} h_{km+j}\right), ~ s_{\lambda} \right\rangle.$$
\end{cor*}

The proof of Theorem \ref{main theorem} follows the following steps. First, we describe the \textit{\textbf{Frobenius characteristics}} of the $G^{n} \rtimes \mathfrak{S}_n$-modules $\eta^{(n)}_*\mathrm{Res}^{\mathrm{GL}_{nm}(\mathbb{C})}_{\mathrm{U}(m)^{n} \rtimes \mathfrak{S}_n} V^{\lambda}$ by explicitly computing the characteristic polynomials of $\eta(\gamma) \in \mathrm{GL}_{nm}(\mathbb{C}).$ Next, we utilize the interplay between symmetric functions $\Lambda$ and \textit{\textbf{wreath symmetric functions}} $\bigotimes_{c \in G^*} \Lambda(X_{c})$ in order to algebraically describe the evaluations of Schur functions $s_{\lambda}$ at the eigenvalues of the matrices $\eta(\gamma) \in \mathrm{GL}_{nm}(\mathbb{C}).$ There is a straightforward generating function which controls these substitutions, namely $\Omega\left( \sum_{c \in G^*} \zeta_c^{-1} X_{c} \Omega_{c,\eta}(Y) \right)$ where $\Omega_{c,\eta}(Y):= \prod_{i\geq 1} \det( \mathrm{Id}_m - y_i\eta(\gamma))^{-1}$ for any $\gamma \in c$, which we use to describe the Frobenius characteristics of the restrictions $\eta^{(n)}_*\mathrm{Res}^{\mathrm{GL}_{nm}(\mathbb{C})}_{\mathrm{U}(m)^{n} \rtimes \mathfrak{S}_n} V^{\lambda}$. Lastly, by using the reproducing kernel property for the pairing on wreath symmetric functions $\bigotimes_{c \in G^*} \Lambda(X_{c})$ we arrive at the main result.

\section{Set-up}

In this section, we will establish some necessary notation. 

\begin{defn}
    For $n \geq 1$, define $\mathrm{GL}_n(\mathbb{C})$ as the group of $n\times n$ invertible complex matrices and define $\mathfrak{S}_n$ as the symmetric group of permutations of $\{1,\ldots,n\}.$ We write $\mathrm{U}(m)$ for the group of $m\times m $ unitary matrices. For the remainder of this paper fix some finite $G$ and a unitary representation $\eta: G \rightarrow \mathrm{U}(m)$. For $n \geq 1$, define $\mathfrak{S}_n(\mathrm{U}(m)):= \mathrm{U}(m)^n\rtimes \mathfrak{S}_n$ where $\mathfrak{S}_n$ acts on $\mathrm{U}(m)^n$ by coordinate permutations. We will write elements of $\mathfrak{S}_n(\mathrm{U}(m))$ as $(g_1,\ldots,g_n; \sigma)$ where $g_1,\ldots, g_n \in \mathrm{U}(m)$ and $\sigma \in \mathfrak{S}_n.$ We embed $\mathfrak{S}_n(\mathrm{U}(m))$ into $\mathrm{GL}_{nm}(\mathbb{C})$ as the group of block matrices 
    $$M = \begin{pmatrix}
        A_{11} & \cdots & A_{1n} \\
        \vdots & \ddots & \vdots \\
        A_{n1} & \ldots & A_{nn} \\
    \end{pmatrix}$$ where each $A_{ij}$ is a matrix with size $m\times m$ with exactly one nonzero block in each row and column whose nonzero blocks $A_{ij} \in \mathrm{U}(m) \subset \mathrm{GL}_m(\mathbb{C})$. Analogously, we define the group $\mathfrak{S}_n(G):= G^n \rtimes \mathfrak{S}_n$ and we write $\eta^{(n)}: \mathfrak{S}_n(G) \rightarrow \mathfrak{S}_n(\mathrm{U}(m)) \subset \mathrm{GL}_{nm}(\mathbb{C})$ for the map $\eta^{(n)}(g_1,\ldots,g_n;\sigma):= (\eta(g_1),\ldots, \eta(g_n);\sigma).$ To be consistent with trivial cases, we set $\mathrm{GL}_0(\mathbb{C}) = \mathfrak{S}_0 = \mathfrak{S}_0(\mathrm{U}(m)) = \mathfrak{S}_0(G) := \{1\}$ to all be the trivial group.
\end{defn}

Here we define the \textit{\textbf{ring of symmetric functions}}.

\begin{defn}
    Define $\Par$ as the set of non-negative integer\textit{\textbf{ partitions}}. Given $\lambda = (\lambda_1,\ldots,\lambda_{\ell}) \in \Par$ we write $|\lambda|:= \lambda_1+\ldots + \lambda_{\ell}$, $\ell(\lambda):= \ell,$ and for $r \geq 1$, $m_{r}(\lambda):= \#( 1\leq i \leq \ell | \lambda_i = r).$ For $\lambda \in \Par$, define $z_{\lambda}:= \prod_{r\geq 1} m_r(\lambda)! r^{m_r(\lambda)}.$ We will write $\Lambda(X)$ for the ring of symmetric functions over $\mathbb{C}$ in the variable set $X = x_1+x_2+\ldots ~.$ We write $p_{\lambda}(X),h_{\lambda}(X)$ and $s_{\lambda}(X)$ for the \textit{\textbf{power sum}}, \textit{\textbf{complete homogeneous}}, and \textit{\textbf{Schur symmetric function}} bases of $\Lambda(X)$ respectively. Define the hermitian \textit{\textbf{Hall pairing}} $\langle -,- \rangle: \Lambda \times \Lambda \rightarrow \mathbb{C}$ by

    $$ \left\langle \sum_{\lambda \in \Par} a_{\lambda} p_{\lambda}(X) , \sum_{\lambda \in \Par} b_{\lambda} p_{\lambda}(X) \right \rangle := \sum_{\lambda \in \Par} z_{\lambda} a_{\lambda}\overline{b_{\lambda}}.$$ Define the \textit{\textbf{plethystic exponential}} as $\Omega(X):= \sum_{n \geq 0} h_n(X).$ 
\end{defn}

Throughout this paper, we will use \textit{\textbf{plethystic substitution}} of symmetric functions. We will define plethystic substitution as follows:

\begin{defn}
    Let $t_1,t_2,\ldots$ be a set of commuting free variables. For a formal series $A \in \mathbb{C}[[t_1,t_2,\ldots ]]$, say $A= \sum_{\alpha} c_{\alpha} t^{\alpha}$ and $n \geq 1$ define 
    $p_n(A) = \sum_{\alpha} c_{\alpha} t^{n\alpha}.$ Now define the plethystic substitution $F \mapsto F(A)$ as the $\mathbb{C}$-algebra homomorphism $\Lambda \rightarrow \mathbb{C}[[t_1,t_2,\ldots ]]$ uniquely determined by $p_n \mapsto p_n(A).$
\end{defn}

We will need to work with a generalization of the ring of symmetric functions with multiple variable sets. Before, however, we need to establish some notation.

\begin{defn}
    Write $G^*$ for the set of conjugacy classes of $G$ and for each $c \in G^*$ write $\zeta_c := \frac{|G|}{|c|}$ for size of the centralizer of the conjugacy class $c.$ Given $\gamma = (g_1,\ldots,g_n;\sigma) \in \mathfrak{S}_n(G)$ and a cycle $i_1,\ldots,i_{\ell}$ of $\sigma$, the element $g_{i_1}\cdots g_{i_{\ell}} \in G$ is called the \textit{\textbf{cycle product}} of $\gamma$ corresponding to $i_1,\ldots,i_{\ell}$ and $\ell$ is referred to as the length of the cycle product. Let $\Phi$ denote the set of functions $\rho: G^* \rightarrow \Par$ and for $n \geq 0$ we let $\Phi_n \subset \Phi$ denote the set of those $\rho$ with $|\rho| = n$ where $|\rho|:= \sum_{c \in G^*} |\rho(c)|.$ Given $\rho \in \Phi_n$, define the conjugacy class $C_{\rho} \subset \mathfrak{S}_n(G)$ as the set of those elements $\gamma \in \mathfrak{S}_n(G)$ such that for all $c \in G^*$ and $\ell \geq 1,$ $\gamma$ has $m_{\ell}(\rho(c))$ cycle products of length $\ell$ belonging to the conjugacy class $c.$ For $\rho \in \Phi,$ define $Z_{\rho}:= \prod_{c \in G^*} z_{\rho(\zeta)} \zeta_c^{\ell(\rho(\zeta))}.$
\end{defn}

Now we define the \textit{\textbf{wreath symmetric functions}} which help to describe the representation theory of the wreath product groups $\mathfrak{S}_n(G)$ as $n$ varies. We refer the reader to Appendix B in Macdonald's book \cite{Macdonald} as well as Ingram--Jing--Stitzinger's paper \cite{IJS09} for a more complete overview of this topic.

\begin{defn}
    Define the ring $R:= \bigotimes_{c \in G^*} \Lambda(X_{c})$ where for $c \in G^*$, $X_{c} = x_{c,1}+x_{c,2}+\ldots$ is a corresponding variable set. We will often write $f(\fX) = f(X_{c}| c \in G^*)$ for elements $f \in R.$ 
    For $\rho \in \Phi$, define 
    $P_{\rho}(\fX):= \prod_{c \in G^*} p_{\rho(c)}(X_{c}) .$ Let $G_*$ denote the set of equivalence classes of irreducible complex representations of $G.$ We will write $\chi^{\gamma}(c)$ for the value of the irreducible character $\chi^{\gamma}$ corresponding to $\gamma \in G_*$ at the conjugacy class $c \in G^*.$ Define $\Psi$ as the set of functions $\rho:G_* \rightarrow \Par$ and for $n \geq 0$ let $\Psi_n$ denote the set of those $\rho$ with $|\rho| = n$ where $|\rho|:= \sum_{\gamma \in G^*} |\rho(\gamma)|.$ For $\rho \in \Psi$ define  
    $S_{\rho}(\fX):= \prod_{\gamma \in G_*} s_{\rho(\gamma)}(\varphi_{\gamma}(\fX))$ where for $\gamma \in G_*$, 
    $\varphi_{\gamma}(\fX):= \sum_{c\in G^*}  \zeta_c^{-1}\chi^{\gamma}(c) X_c$ and $s_{\rho(\gamma)}(\varphi_{\gamma}(\fX))$ denotes a plethystic substitution. 
    Define the hermitian pairing $(-,-):R\times R \rightarrow \mathbb{C}$ by 
    $$\left( \sum_{\rho \in \Phi} a_{\rho} P_{\rho}(\fX) , \sum_{\rho \in \Phi} b_{\rho} P_{\rho}(\fX) \right):= \sum_{\rho \in \Phi} Z_{\rho} a_{\rho} \overline{b_{\rho}}$$ Lastly, for $f(\fX) = \sum_{\rho } a_{\rho} P_{\rho}(\fX)$ define $\overline{f(\fX)}:= \sum_{\rho } \overline{a_{\rho}} P_{\rho}(\fX).$
\end{defn}

\section{Branching coefficients}

We begin by defining the branching coefficients which we will be studying for the remainder of this paper. 

\begin{defn}
    Let $n \geq 0$.
    For $\lambda \in \Par$ with $\ell(\lambda)\leq n$, define $V^{\lambda}$ to be the \textit{\textbf{highest weight representation}} of $\mathrm{GL}_n(\mathbb{C})$ corresponding to $\lambda.$ For $\rho \in \Psi_n$, we will write $W_{\rho}$ for the corresponding irreducible representation of $\mathfrak{S}_n(G)$ given explicitly as 
    $$W_{\rho}:= \mathrm{Ind}_{\prod_{\gamma \in G_*} \mathfrak{S}_{|\rho(\gamma)|}(G) }^{\mathfrak{S}_n(G)} \bigotimes_{\gamma \in G_*} S^{\rho(\gamma)}\otimes \gamma^{\otimes |\rho(\gamma)|}$$ where $S^{\rho(\gamma)}$ are the \textit{\textbf{Specht modules}} of $\mathfrak{S}_{|\rho(\gamma)|}$ corresponding to the partitions $\rho(\gamma)$.
    For $\ell(\lambda) \leq nm$ and $\rho \in \Psi_n$, define the \textit{\textbf{branching coefficient}} 
    $$d_{\rho,\lambda}^{\eta}:= \mathrm{dim}_{\mathbb{C}} \mathrm{Hom}_{\mathfrak{S}_{n}(G)}\left( W_{\rho}, \eta^{(n)}_*\mathrm{Res}^{\mathrm{GL}_{nm}(\mathbb{C})}_{\mathfrak{S}_n(\mathrm{U}(m))} V^{\lambda} \right)$$ where $\eta^{(n)}_*$ denotes the representation-theoretic pullback along $\eta^{(n)}:\mathfrak{S}_n(G) \rightarrow \mathfrak{S}_n(\mathrm{U}(m)).$
\end{defn} 

In order to algebraically compute the values $d_{\rho,\lambda}^{\eta}$, we introduce the following generating functions.

\begin{defn}
    For $\rho \in \Phi$, define $\Xi_{\rho,\eta}$ as the multi-set of eigenvalues of the characteristic polynomial of any $\eta^{(|\rho|)}(\gamma)$ for $\gamma \in C_{\rho}$. For $f \in \Lambda$, we will write $f(\Xi_{\rho,\eta})$ for the evaluation $f(\alpha_1,\ldots,\alpha_{nm})$ where $\Xi_{\rho,\eta} = \{\alpha_1,\ldots,\alpha_{nm}\}.$ Given $\ell(\lambda) \leq nm$, define $F_{\lambda,\eta}^{(n)}(\fX) \in R$ as 
    $$F_{\lambda,\eta}^{(n)}(\fX):= \sum_{\rho \in \Phi_n} \frac{s_{\lambda}(\Xi_{\rho,\eta})}{Z_{\rho}} P_{\rho}(\fX).$$
\end{defn}

The wreath symmetric functions $F_{\lambda,\eta}^{(n)}(\fX)$ are generating functions for the branching coefficients $d_{\rho,\lambda}^{\eta}$ in the following sense:

\begin{prop}\label{formula for branching coeff}
For $\rho \in \Psi_n$ and $\lambda \in \Par$ with $\ell(\lambda) \leq nm$,
    $$d_{\rho,\lambda}^{\eta} = \left( F_{\lambda,\eta}^{(n)}(\fX), S_{\rho}(\fX) \right).$$
\end{prop}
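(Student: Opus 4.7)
The plan is to reduce the Hom dimension to a character inner product and then recognize both factors of the pairing on $R$ as Frobenius characteristics evaluated against each other. Concretely, by Schur orthogonality applied to the finite group $\mathfrak{S}_n(\mu_m)$,
\[
d_{\rho,\lambda} \;=\; \frac{1}{|\mathfrak{S}_n(\mu_m)|}\sum_{\gamma \in \mathfrak{S}_n(\mu_m)} \overline{\chi_{W_\rho}(\gamma)}\,\chi_{V^\lambda}(\gamma).
\]
Since conjugacy classes in $\mathfrak{S}_n(\mu_m)$ are indexed by $\mu \in \Phi_n$ and the centralizer of any $\gamma \in C_\mu$ has order $Z_\mu$, I would collect terms class-by-class to rewrite this as
\[
d_{\rho,\lambda} \;=\; \sum_{\mu \in \Phi_n} \frac{\overline{\chi_{W_\rho}(C_\mu)}\,\chi_{V^\lambda}(C_\mu)}{Z_\mu}.
\]

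Next I would identify each character value. The character of the polynomial $\GL_n(\mathbb{C})$-representation $V^\lambda$ evaluated at any matrix equals the Schur function in its eigenvalues; restricting to $\gamma \in C_\mu \subset \mathfrak{S}_n(\mu_m) \hookrightarrow \GL_n(\mathbb{C})$, this gives $\chi_{V^\lambda}(C_\mu)=s_\lambda(\Xi_\mu)$ by definition of $\Xi_\mu$. For the wreath side, I would invoke the classical wreath-product Frobenius characteristic (as in Macdonald's Appendix B and \cite{IJS09}), whose content is precisely the expansion
\[
S_\rho(\fX) \;=\; \sum_{\mu \in \Phi_n} \frac{\chi_{W_\rho}(C_\mu)}{Z_\mu}\,P_\mu(\fX).
\]
Taking complex conjugates of coefficients (power-sums are real under $\fX$) yields $\overline{S_\rho(\fX)} = \sum_\mu \frac{\overline{\chi_{W_\rho}(C_\mu)}}{Z_\mu} P_\mu(\fX)$.

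Finally I would substitute into the pairing. By definition $F_\lambda^{(n)}(\fX)= \sum_\mu \frac{s_\lambda(\Xi_\mu)}{Z_\mu} P_\mu(\fX)$, and the orthogonality $(P_\mu, P_\nu) = \delta_{\mu,\nu}Z_\mu$ gives
\[
\bigl(F_\lambda^{(n)}(\fX),\,\overline{S_\rho(\fX)}\bigr) \;=\; \sum_{\mu \in \Phi_n} \frac{s_\lambda(\Xi_\mu)\,\overline{\chi_{W_\rho}(C_\mu)}}{Z_\mu^2}\cdot Z_\mu \;=\; \sum_{\mu \in \Phi_n} \frac{s_\lambda(\Xi_\mu)\,\overline{\chi_{W_\rho}(C_\mu)}}{Z_\mu},
\]
matching the expression for $d_{\rho,\lambda}$ above.

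The only nontrivial step is the identification of $S_\rho(\fX)$ with the Frobenius characteristic of $W_\rho$; this is standard and referenced in the paper, so I would cite it rather than reprove it. Everything else is bookkeeping using orthogonality of power-sums and the eigenvalue description of $\chi_{V^\lambda}$.
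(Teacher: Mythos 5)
Your proof is correct, and it rests on the same essential fact as the paper's: Macdonald's identification of $S_{\rho}(\fX)$ with the Frobenius characteristic of $W_{\rho}$. The difference is one of packaging. The paper's proof is a two-line dual-basis argument: it cites the orthonormality $(S_{\rho},\overline{S_{\gamma}})=\delta_{\rho,\gamma}$ and the observation that $F_{\lambda}^{(n)}(\fX)$ is by definition the Frobenius characteristic of $\mathrm{Res}^{\mathrm{GL}_n(\mathbb{C})}_{\mathfrak{S}_n(\mu_m)}V^{\lambda}$, so pairing against $\overline{S_{\rho}}$ extracts the multiplicity. You instead open the black box: you reduce $d_{\rho,\lambda}$ to a class-by-class character inner product, use $\chi_{V^{\lambda}}(\gamma)=s_{\lambda}(\Xi_{\mu})$ and the $P$-expansion $S_{\rho}=\sum_{\mu}Z_{\mu}^{-1}\chi_{W_{\rho}}(C_{\mu})P_{\mu}$, and verify the match directly from $(P_{\mu},P_{\nu})=\delta_{\mu,\nu}Z_{\mu}$. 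What your version buys is that it makes explicit the two facts the paper leaves implicit --- that the centralizer order of $C_{\mu}$ is $Z_{\mu}$, and that restricting $\chi_{V^{\lambda}}$ to $C_{\mu}$ gives $s_{\lambda}(\Xi_{\mu})$ --- at the cost of a longer computation; the paper's version is cleaner but leans harder on the cited orthonormality. Either is acceptable.
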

\begin{proof}
    By \cite{Macdonald} [Appendix B, 7.4, pg. 174] [Appendix B, 9.7, pg. 178], we know that $(S_{\rho},S_{\gamma}) = \delta_{\rho,\gamma}$ and $S_{\rho}(\fX)$ is the Frobenius characteristic of the irreducible representation $W_{\rho}$ of $\mathfrak{S}_n(G).$ By definition, $F_{\lambda,\eta}^{(n)}(\fX)$ is the Frobenius characteristic of $\eta^{(n)}_*\mathrm{Res}^{\mathrm{GL}_{nm}(\mathbb{C})}_{\mathfrak{S}_n(\mathrm{U}(m))} V^{\lambda}$ and therefore, the multiplicity of $W_{\rho}$ in $\eta^{(n)}_*\mathrm{Res}^{\mathrm{GL}_{nm}(\mathbb{C})}_{\mathfrak{S}_n(\mathrm{U}(m))} V^{\lambda}$, i.e., the coefficient of $S_{\rho}(\fX)$ in the $S$-basis expansion of $F_{\lambda,\eta}^{(n)}(\fX)$, is given by $\left( F_{\lambda,\eta}^{(n)}(\fX), S_{\rho}(\fX) \right).$
\end{proof}

In order to find a workable formula for the Frobenius characteristic $F_{\lambda,\eta}^{(n)}(\fX)$, we will first find a formula for the values $s_{\lambda}(\Xi_{\rho,\eta}).$ 

\begin{defn}
    For $\rho \in \Phi$, define 
    $g_{\rho,\eta}(X):= \sum_{\lambda \in \Par} \frac{p_{\lambda}(X)p_{\lambda}(\Xi_{\rho,\eta})}{z_{\lambda}}.$
\end{defn}

The following is an immediate consequence of the definition of the Hall pairing:

\begin{lem}\label{substitution lem}
For all $\rho \in \Phi$ and $f \in \Lambda,$
    $f(\Xi_{\rho,\eta}) = \langle g_{\rho,\eta}(X), f(X) \rangle .$
\end{lem}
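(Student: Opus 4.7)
The statement is that $g_\rho$ serves as a reproducing kernel for the Hall pairing, evaluated at the multi-set $\Xi_\rho$. Since both sides are $\mathbb{C}$-linear in $f$, and since the power sums $\{p_\mu(X) \mid \mu \in \Par\}$ span $\Lambda$, it suffices to verify the identity on the basis $f = p_\mu$.

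The plan is to substitute the definition of $g_\rho(X)$ and use the defining property $\langle p_\lambda, p_\mu \rangle = z_\lambda \delta_{\lambda,\mu}$ of the Hall pairing. Explicitly, for any $\mu \in \Par$,
\[
\langle g_\rho(X), p_\mu(X) \rangle
= \left\langle \sum_{\lambda \in \Par} \frac{p_\lambda(X)\, p_\lambda(\Xi_\rho)}{z_\lambda},\; p_\mu(X) \right\rangle
= \sum_{\lambda \in \Par} \frac{p_\lambda(\Xi_\rho)}{z_\lambda}\, \langle p_\lambda(X), p_\mu(X)\rangle,
\]
and the orthogonality relation collapses the sum to the single term $\lambda = \mu$, yielding $p_\mu(\Xi_\rho)$, which is exactly $f(\Xi_\rho)$ for $f = p_\mu$.

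A minor technical point worth noting is that $g_\rho(X)$ is a priori an infinite formal sum in $\Lambda$ (not a single symmetric function of bounded degree), but the pairing $\langle g_\rho, p_\mu \rangle$ is well-defined because only one term $\lambda = \mu$ contributes. More generally, for $f \in \Lambda$ homogeneous of degree $d$, only the terms with $|\lambda| = d$ in $g_\rho$ contribute, so the pairing is always a finite sum. There is no real obstacle here — this is essentially the standard reproducing-kernel identity for the Hall pairing applied to the ``evaluation functional'' $f \mapsto f(\Xi_\rho)$.
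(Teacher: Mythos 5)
Your proof is correct and is exactly the computation the paper leaves implicit: it states this lemma without proof as ``an immediate consequence of the definition of the Hall pairing,'' and your expansion in the power-sum basis together with $\langle p_\lambda, p_\mu\rangle = z_\lambda\delta_{\lambda,\mu}$ is that intended argument. The remark about only finitely many terms of the infinite sum $g_\rho$ contributing is a reasonable point to make explicit, but there is nothing further to add.
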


It will be necessary to describe the eigenvalues of the matrices $\gamma \in \mathfrak{S}_n(G).$

\begin{defn}
    Given $c \in G^*$ define the polynomial $f_{c,\eta}(t):=  \det(\mathrm{Id}_{m} - t\eta(\gamma))$ where $\gamma \in c.$ More generally, for $\rho \in \Phi$, define the polynomial $f_{\rho,\eta}(t):= \det\left( \mathrm{Id}_{|\rho|m} - t \eta^{(|\rho|)}(\gamma)  \right)$ where $\gamma$ is any element of $C_{\rho}.$ Define $\Omega_{c,\eta}(X):= \prod_{i\geq 1}f_{c,\eta}(x_i)^{-1}.$
\end{defn}

We need the next lemma. Write $0_m$ for the $m \times m$ matrix containing all $0$'s.

\begin{lem}\label{det lem}
    For all $m\times m$ matrices $A_1,\ldots, A_r$ with $A_r$ invertible, 
    $$\det \begin{pmatrix}
        \mathrm{Id}_m & 0_m & 0_m & \cdots & 0_m &0_m &A_r \\
       A_1 & \mathrm{Id}_m & 0_m & \ldots & 0_m & 0_m & 0_m \\
        0_m & A_2 & \mathrm{Id}_m & \ldots & 0_m & 0_m &0_m \\
        \vdots & \vdots & \vdots & \ddots & \vdots & \vdots & \vdots \\
         0_m & 0_m & 0_m&  \cdots & \mathrm{Id}_m &  0_m  & 0_m\\
         0_m & 0_m & 0_m& \cdots & A_{r-2} &  \mathrm{Id}_m  & 0_m\\
        0_m & 0_m & 0_m & \cdots & 0_m & A_{r-1} & \mathrm{Id}_m \\
    \end{pmatrix} = \det\left( \mathrm{Id}_m + (-1)^{r-1}A_r\cdots A_1 \right).$$
\end{lem}
\begin{proof}
    Let $M$ be the matrix 
    $$M:= \begin{pmatrix}
        \mathrm{Id}_m & 0_m & 0_m & \cdots & 0_m &0_m &A_r \\
       A_1 & \mathrm{Id}_m & 0_m & \ldots & 0_m & 0_m & 0_m \\
        0_m & A_2 & \mathrm{Id}_m & \ldots & 0_m & 0_m &0_m \\
        \vdots & \vdots & \vdots & \ddots & \vdots & \vdots & \vdots \\
         0_m & 0_m & 0_m&  \cdots & \mathrm{Id}_m &  0_m  & 0_m\\
         0_m & 0_m & 0_m& \cdots & A_{r-2} &  \mathrm{Id}_m  & 0_m\\
        0_m & 0_m & 0_m & \cdots & 0_m & A_{r-1} & \mathrm{Id}_m \\
    \end{pmatrix}.$$
    
    We may decompose $M$ into blocks as $M = \begin{pmatrix}
        A & B \\
        C & D
    \end{pmatrix}$ where $A = \mathrm{Id}_m$, $B = \begin{pmatrix}
        0_m & \cdots& 0_m &A_r
    \end{pmatrix}$, $C = \begin{pmatrix}
        A_1 \\
        0_m \\
        \vdots \\
        0_m \\
    \end{pmatrix}$, and 
    $$D = \begin{pmatrix}
        
        \mathrm{Id}_m & 0_m & \ldots & 0_m & 0_m & 0_m \\
         A_2 & \mathrm{Id}_m & \ldots & 0_m & 0_m &0_m \\
         \vdots & \vdots & \ddots & \vdots & \vdots & \vdots \\
          0_m & 0_m&  \cdots & \mathrm{Id}_m &  0_m  & 0_m\\
          0_m & 0_m& \cdots & A_{r-2} &  \mathrm{Id}_m  & 0_m\\
         0_m & 0_m & \cdots & 0_m & A_{r-1} & \mathrm{Id}_m \\
    \end{pmatrix}.$$ The Schur complement of $A$ in this decomposition is given by 
    $$M':= D - CA^{-1}B = \begin{pmatrix}
        
        \mathrm{Id}_m & 0_m & \ldots & 0_m & 0_m & -A_1A_r \\
         A_2 & \mathrm{Id}_m & \ldots & 0_m & 0_m &0_m \\
         \vdots & \vdots & \ddots & \vdots & \vdots & \vdots \\
          0_m & 0_m&  \cdots & \mathrm{Id}_m &  0_m  & 0_m\\
          0_m & 0_m& \cdots & A_{r-2} &  \mathrm{Id}_m  & 0_m\\
         0_m & 0_m & \cdots & 0_m & A_{r-1} & \mathrm{Id}_m \\
    \end{pmatrix}$$ and so by Schur's determinant formula, 
    $\det(M) = \det(A)\det(M') = 1\cdot \det(M') = \det(M') .$ Notice that $M'$ has the same block structure as $M$ so we may repeatedly apply this procedure to find that 
    $$\det(M) = \begin{pmatrix}
        \mathrm{Id}_m &(-1)^{r-2}A_{r-2}\cdots A_1A_r \\
        A_{r-1} & \mathrm{Id}_m \\
    \end{pmatrix}.$$ Applying Schur's formula one last time yields 
    $$\begin{pmatrix}
        \mathrm{Id}_m &(-1)^{r-2}A_{r-2}\cdots A_1A_r \\
        A_{r-1} & \mathrm{Id}_m \\
    \end{pmatrix} = \det \left( \mathrm{Id}_m + (-1)^{r-1} A_{r-1}\cdots A_1A_r \right).$$ Since $A_r$ is invertible we may conjugate by $A_r$ to obtain $\det(M) = \det \left( \mathrm{Id}_m + (-1)^{r-1} A_{r-1}\cdots A_1A_r \right) = \det \left( \mathrm{Id}_m + (-1)^{r-1} A_{r}\cdots A_1 \right).$
\end{proof}

The polynomials $f_{\rho,\eta}(t)$ have the following simple description:

\begin{lem}\label{char pol lem}
   For $\rho \in \Phi$,  
    $f_{\rho,\eta}(t) = \prod_{c \in G^*} \prod_{\ell \geq 1} f_{c,\eta}(t^{\ell})^{m_{\ell}(\rho(c))}.$
\end{lem}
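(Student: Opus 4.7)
The plan is to pick any convenient representative $\gamma \in C_{\rho}$ (since $f_\rho(t)$ depends only on the conjugacy class) and directly block-diagonalize. Writing $\gamma = (\zeta_m^{a_1},\ldots,\zeta_m^{a_n};\sigma)$, the key observation is that after reindexing the basis $e_1,\ldots,e_n$ of $\mathbb{C}^n$ so that each cycle of $\sigma$ gives a contiguous block of indices, the matrix $\gamma$ becomes block-diagonal with one block per cycle of $\sigma$. The determinant then factors over cycles, so it suffices to compute the contribution of a single cycle.

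Next, I would analyze a single block. If $i_1,\ldots,i_{\ell}$ is a cycle of $\sigma$ with associated cycle product $\zeta = \zeta_m^{a_{i_1}}\cdots \zeta_m^{a_{i_{\ell}}}$, then the corresponding $\ell\times\ell$ block $B$ is a shifted cyclic matrix whose nonzero entries are $\zeta_m^{a_{i_1}},\ldots,\zeta_m^{a_{i_{\ell}}}$ appearing in the pattern of a single $\ell$-cycle permutation matrix. One readily checks that $\det(\lambda I_{\ell} - B) = \lambda^{\ell} - \zeta$, so the $\ell$ eigenvalues of $B$ are the $\ell$-th roots of $\zeta$. Consequently,
$$
\det(I_{\ell} - tB) \;=\; t^{\ell}\det(t^{-1}I_{\ell} - B) \;=\; t^{\ell}(t^{-\ell} - \zeta) \;=\; 1 - \zeta t^{\ell}.
$$

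Finally, I would assemble the full product. By the definition of $C_{\rho}$, the element $\gamma$ has exactly $m_{\ell}(\rho(\zeta))$ cycles of length $\ell$ with cycle product $\zeta$, for each $\zeta \in \mu_m$ and $\ell \geq 1$. Multiplying the per-block contributions gives the claimed formula
$$
f_{\rho}(t) \;=\; \prod_{\zeta \in \mu_m}\prod_{\ell \geq 1}(1-\zeta t^{\ell})^{m_{\ell}(\rho(\zeta))}.
$$
There is no real obstacle here; the only subtlety is bookkeeping, namely verifying that the product $\zeta_m^{a_{i_1}}\cdots \zeta_m^{a_{i_{\ell}}}$ along the cycle is exactly the quantity $\zeta$ that appears in the determinant of the block, so that the conventions for "cycle product" in the definition of $C_{\rho}$ match the scalar appearing in $\lambda^{\ell}-\zeta$. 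Once that identification is made, the proof is just a direct computation.
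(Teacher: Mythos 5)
Your proof is correct and follows essentially the same route as the paper: block-diagonalize $\gamma$ according to the cycles of $\sigma$, observe that each $\ell$-cycle block with cycle product $\zeta$ contributes $1-\zeta t^{\ell}$ to the shifted characteristic polynomial, and multiply over blocks using the definition of $C_{\rho}$.
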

\begin{proof}
Let $\rho \in \Phi_n$ and $\gamma = (g_1,\ldots,g_n; \sigma) \in C_{\rho}.$ The matrix $\eta^{(n)}(\gamma)$ is conjugate in $\mathrm{GL}_{nm}(\mathbb{C})$ to a block diagonal matrix with each block of the form $( \eta(g_{i_1}),\ldots, \eta(g_{i_r}); ( 1\cdots r)) \in \mathfrak{S}_r(G)$ where $(i_1,\ldots,i_r)$ range over the cycles of $\sigma$ and $(1\cdots r)$ denotes the standard long cycle in $\mathfrak{S}_r.$ It is straightforward to see by applying Lemma \ref{det lem} with $A_j:= \eta(g_{i_{r-j+1}})$ that
$$\det\left(\mathrm{Id}_{rm} - t ( \eta(g_{i_1}),\ldots, \eta(g_{i_r}); ( 1\cdots r)) \right) = \det( \mathrm{Id}_{m} - t^r \eta(g_{i_1}\cdots g_{i_r}))= f_{c,\eta}(t^r)$$ where $c$ is the conjugacy class of $g_{i_1}\cdots g_{i_r}.$ The result follows.
\end{proof}

The series $g_{\rho,\eta}(X)$ has an explicit product formula.

\begin{prop}\label{sub kernel prop}
For all $\rho \in \Phi,$
    $g_{\rho,\eta}(X) = \prod_{i \geq 1} \prod_{c \in G^*} \prod_{\ell \geq 1} f_{c,\eta}(x_i^{\ell})^{-m_{\ell}(\rho(c))}.$
\end{prop}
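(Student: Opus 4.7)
The plan is to recognize $g_{\rho}(X)$ as a specialization of the standard Cauchy reproducing kernel and then invoke Lemma \ref{char pol lem} to turn the resulting eigenvalue product into the claimed factorization.

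First, I would write $\Xi_{\rho} = \{\alpha_1, \ldots, \alpha_n\}$ (with $n = |\rho|$) and recall the Cauchy identity
$$\sum_{\lambda \in \Par} \frac{p_{\lambda}(X) p_{\lambda}(Y)}{z_{\lambda}} = \prod_{i,j \geq 1} \frac{1}{1 - x_i y_j}.$$
Specializing $Y$ to the finite alphabet $\Xi_{\rho}$ (equivalently, substituting $p_n(Y) \mapsto p_n(\Xi_{\rho}) = \alpha_1^n + \cdots + \alpha_n^n$) gives
$$g_{\rho}(X) = \prod_{i \geq 1} \prod_{j=1}^{n} \frac{1}{1 - x_i \alpha_j}.$$

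Next, for each fixed $i$ the inner product $\prod_{j=1}^n (1 - x_i \alpha_j)$ is exactly $\det(\mathrm{Id}_n - x_i \gamma) = f_{\rho}(x_i)$ for any $\gamma \in C_{\rho}$, since the $\alpha_j$ are the eigenvalues of $\gamma$. By Lemma \ref{char pol lem},
$$f_{\rho}(x_i) = \prod_{\zeta \in \mu_m} \prod_{\ell \geq 1} (1 - \zeta x_i^{\ell})^{m_{\ell}(\rho(\zeta))}.$$
Substituting this into the displayed expression for $g_{\rho}(X)$ and taking the reciprocal factor by factor yields
$$g_{\rho}(X) = \prod_{i \geq 1} \frac{1}{f_{\rho}(x_i)} = \prod_{i \geq 1} \prod_{\zeta \in \mu_m} \prod_{\ell \geq 1} (1 - \zeta x_i^{\ell})^{-m_{\ell}(\rho(\zeta))},$$
which is the claim.

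There is essentially no obstacle here: both ingredients (the Cauchy kernel and Lemma \ref{char pol lem}) are available, and the only minor point is a bookkeeping check that specializing $Y$ to a finite multi-set of scalars in the Cauchy identity is legitimate — which it is, since both sides are formal power series in the $x_i$ whose coefficients are polynomials in the $\alpha_j$, and the identity $\sum_\lambda z_\lambda^{-1} p_\lambda(X) p_\lambda(\Xi_\rho) = \prod_i \prod_j (1-x_i\alpha_j)^{-1}$ is the standard plethystic evaluation of $\Omega$.
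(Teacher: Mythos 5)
Your proof is correct and follows essentially the same route as the paper: both reduce to the identity $g_{\rho}(X) = \prod_{i\geq 1} f_{\rho}(x_i)^{-1}$ and then apply Lemma \ref{char pol lem}. The only cosmetic difference is that you invoke the Cauchy identity and factor the characteristic polynomial over the eigenvalues, whereas the paper re-derives the same fact via $\exp\bigl(\sum_{r} \mathrm{Tr}(\gamma^r) x_i^r / r\bigr) = \det(1 - x_i\gamma)^{-1}$.
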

\begin{proof}
 In what follows, let $\rho \in \Phi$ and $\gamma \in C_{\rho}$. By definition, $g_{\rho,\eta}(X) = \sum_{\lambda \in \Par} \frac{p_{\lambda}(X)p_{\lambda}(\Xi_{\rho,\eta})}{z_{\lambda}}$ which we may rewrite as $\exp\left( \sum_{r \geq 1} \frac{p_{r}(X)p_{r}(\Xi_{\rho,\eta})}{r} \right).$ Expanding $p_r(X) = \sum_{i\geq 1}x_i^r$ yields
    $$\prod_{i \geq 1}\exp\left( \sum_{r \geq 1} \frac{x_i^{r}p_{r}(\Xi_{\rho,\eta})}{r}  \right).$$ Now we notice that $p_r(\Xi_{\rho,\eta}) = \mathrm{Tr}(\eta(\gamma)^r)$ and furthermore, for all $i \geq 1,$
    $$\exp\left( \sum_{r \geq 1} \frac{x_i^{r}p_{r}(\Xi_{\rho,\eta})}{r}  \right) = \det(1-x_i \eta(\gamma))^{-1}.$$ Therefore, 
    $$\prod_{i \geq 1}\exp\left( \sum_{r \geq 1} \frac{x_i^{r}p_{r}(\Xi_{\rho,\eta})}{r}  \right) = \prod_{i \geq 1} \det(1-x_i \eta(\gamma))^{-1} = \prod_{i \geq 1} f_{\rho,\eta}(x_i)^{-1}.$$ Lastly, by Lemma \ref{char pol lem} we find that 
    $$\prod_{i \geq 1} f_{\rho,\eta}(x_i)^{-1} = \prod_{i \geq 1} \prod_{c \in G^*} \prod_{\ell \geq 1} f_{c,\eta}(x_i^{\ell})^{-m_{\ell}(\rho(c))}.$$
\end{proof}

Here we show that the generating series for the $g_{\rho,\eta}(Y)$ series has a useful plethystic form.

\begin{prop}\label{plethystic kernel formula prop}

    $$\sum_{\rho \in \Phi} \frac{P_{\rho}(\fX)g_{\rho,\eta}(Y)}{Z_{\rho}} = \Omega\left( \sum_{c \in G^*} \zeta_c^{-1} X_{c} \Omega_{c,\eta}(Y) \right)$$
\end{prop}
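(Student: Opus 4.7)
The strategy is to show that both sides equal the common exponential
$$\exp\!\left( \frac{1}{m}\sum_{\zeta \in \mu_m}\sum_{n \geq 1}\frac{p_n(X_{\zeta})\prod_{i\geq 1}(1-\zeta y_i^n)^{-1}}{n} \right),$$
using a factorization plus exponential-formula argument on the left, and a direct plethystic computation on the right.

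For the left-hand side, I would substitute the product formula for $g_{\rho}(Y)$ from Proposition \ref{sub kernel prop}. Since $\Phi$ is canonically $\Par^{\mu_m}$ and both
$$\frac{P_{\rho}(\fX)}{Z_{\rho}} = \prod_{\zeta \in \mu_m}\frac{p_{\rho(\zeta)}(X_{\zeta})}{z_{\rho(\zeta)}\,m^{\ell(\rho(\zeta))}},\qquad g_{\rho}(Y) = \prod_{\zeta \in \mu_m}\prod_{\ell \geq 1}\prod_{i \geq 1}(1-\zeta y_i^{\ell})^{-m_{\ell}(\rho(\zeta))}$$
factor as products over $\zeta \in \mu_m$ depending only on the individual partitions $\rho(\zeta)$, the entire sum over $\rho$ factorizes as a product over $\zeta$ of sums over single partitions $\nu$. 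For each $\zeta$, parameterizing $\nu$ by its multiplicity sequence $(m_{\ell})_{\ell \geq 1}$ and using $p_{\nu} = \prod_{\ell} p_{\ell}^{m_{\ell}}$, $z_{\nu} = \prod_{\ell}\ell^{m_{\ell}}m_{\ell}!$, and $m^{\ell(\nu)} = \prod_{\ell} m^{m_{\ell}}$, the exponential formula $\sum_{(m_{\ell})}\prod_{\ell}\tfrac{a_{\ell}^{m_{\ell}}}{\ell^{m_{\ell}}m_{\ell}!} = \exp\bigl(\sum_{\ell} a_{\ell}/\ell\bigr)$, applied with $a_{\ell} = p_{\ell}(X_{\zeta})\prod_i(1-\zeta y_i^{\ell})^{-1}/m$, collapses the $\zeta$-factor to an exponential; multiplying over $\zeta$ produces the target.

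For the right-hand side, I would set $A := \frac{1}{m}\sum_{\zeta \in \mu_m}X_{\zeta}\Omega(Y;\zeta)$ and expand $\Omega(A) = \exp\bigl(\sum_{n \geq 1}p_n(A)/n\bigr)$. The key computation is $p_n(A)$: by Definition 2.4, the plethystic operation $p_n$ is multiplicative on formal series and preserves scalar coefficients, so it passes through the constants $1/m$ and $\zeta^{a}$, acts as the ordinary power sum on $X_{\zeta}$, and sends $\Omega(Y;\zeta) = \sum_{(a_j)}\zeta^{\sum a_j}\prod_j y_j^{a_j}$ to $\sum_{(a_j)}\zeta^{\sum a_j}\prod_j y_j^{na_j} = \prod_i(1-\zeta y_i^n)^{-1}$. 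This gives $p_n(A) = \tfrac{1}{m}\sum_{\zeta}p_n(X_{\zeta})\prod_i(1-\zeta y_i^n)^{-1}$, which matches the exponential obtained from the left-hand side.

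The main delicate point will be the plethystic identity $p_n(\Omega(Y;\zeta)) = \prod_i (1-\zeta y_i^n)^{-1}$: the root-of-unity factors $\zeta^{a}$ in the monomial expansion are scalar \emph{coefficients} and therefore are not raised to the $n$-th power under $t^{\alpha}\mapsto t^{n\alpha}$, even though a naive reading might suggest otherwise. Once this convention is correctly applied, the term-by-term matching of the two exponentials completes the identification.
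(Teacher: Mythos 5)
Your proposal is correct and follows essentially the same route as the paper: factor the sum over $\rho$ into a product over $\zeta \in \mu_m$ of single-partition sums and collapse each one via the identity $\sum_{\lambda} p_{\lambda}(A)/z_{\lambda} = \exp\left(\sum_{n\geq 1} p_n(A)/n\right) = \Omega(A)$ (your ``exponential formula''), the only cosmetic difference being that the paper recognizes each factor directly as $\Omega\left(\tfrac{1}{m}X_{\zeta}\Omega(Y;\zeta)\right)$ rather than meeting the right-hand side at an explicit common exponential. The convention you flag as delicate --- that $\zeta$ is a scalar coefficient under plethysm, so $p_n(\Omega(Y;\zeta)) = \prod_{i\geq 1}(1-\zeta y_i^{n})^{-1}$ with $\zeta$ not raised to the $n$-th power --- is exactly the convention the paper uses implicitly when it rewrites $p_{\rho(\zeta)}(X_{\zeta})\prod_{\ell\geq 1}\prod_{i\geq 1}(1-\zeta y_i^{\ell})^{-m_{\ell}(\rho(\zeta))}$ as $p_{\rho(\zeta)}\left(X_{\zeta}\prod_{i\geq 1}(1-\zeta y_i)^{-1}\right)$.
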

\begin{proof}
    Using Proposition \ref{sub kernel prop}, 
        $$\sum_{\rho \in \Phi} \frac{P_{\rho}(\fX)g_{\rho,\eta}(Y)}{Z_{\rho}} = \sum_{\rho \in \Phi} \frac{P_{\rho}(\fX)}{Z_{\rho}} \prod_{i \geq 1} \prod_{c \in G^*} \prod_{\ell \geq 1} f_{c,\eta}(y_i^{\ell})^{-m_{\ell}(\rho(c))}.$$ We may factor each of these summands as 
        $$\frac{P_{\rho}(\fX)}{Z_{\rho}} \prod_{i \geq 1} \prod_{c \in G^*} \prod_{\ell \geq 1}f_{c,\eta}(y_i^{\ell})^{-m_{\ell}(\rho(c))} = \prod_{c \in G^*} \left( \frac{p_{\rho(c)}(X_{c})}{z_{\rho(c)} \zeta_c^{\ell(\rho(c))}} \prod_{\ell \geq 1} \prod_{i \geq 1} f_{c,\eta}(y_i^{\ell})^{-m_{\ell}(\rho(c))} \right)$$ which we will write using plethystic substitution as 
        $$\prod_{c \in G^*} \left( \frac{p_{\rho(c)}(X_{c})}{z_{\rho(c)} \zeta_c^{\ell(\rho(c))}} \prod_{\ell \geq 1} \prod_{i \geq 1} f_{c,\eta}(y_i^{\ell})^{-m_{\ell}(\rho(c))} \right) = \prod_{c \in G^*} \left( \frac{p_{\rho(c)}\left(X_{c} \prod_{i \geq 1} f_{c,\eta}(y_i)^{-1} \right)}{z_{\rho(c)} \zeta_c^{\ell(\rho(c))}}  \right).$$ Note $\prod_{i \geq 1} f_{c,\eta}(y_i)^{-1} = \Omega_{c,\eta}(Y).$

        Therefore, 
        $$\sum_{\rho \in \Phi} \frac{P_{\rho}(\fX)}{Z_{\rho}} \prod_{i \geq 1} \prod_{c \in G^*} \prod_{\ell \geq 1} f_{c,\eta}(y_i^{\ell})^{-m_{\ell}(\rho(\zeta))} = \prod_{c\in G^*}\left( \sum_{\lambda \in \Par} \frac{p_{\lambda}\left(X_{\zeta} \Omega_{c,\eta}(Y) \right)}{z_{\lambda} \zeta_c^{\ell(\lambda)}}\right)$$ which simplifies to 
        $$ \prod_{c \in G^*} \Omega \left( \zeta_c^{-1} X_{c} \Omega_{c,\eta}(Y) \right) = \Omega\left( \sum_{c \in G^*} \zeta_c^{-1} X_{c} \Omega_{c,\eta}(Y) \right).$$
\end{proof}

It will be convenient to introduce the following notation:

\begin{defn}
    For $\lambda \in \Par,$ define $F_{\lambda,\eta}(\fX):= \sum_{\rho \in \Phi} \frac{P_{\rho}(\fX) s_{\lambda}(\Xi_{\rho,\eta})}{Z_{\rho}}.$
\end{defn}

Note that for $\ell(\lambda) \leq nm,$ the homogeneous degree $n$ component of $F_{\lambda,\eta}(\fX)$ is $F^{(n)}_{\lambda,\eta}(\fX).$ We may now give a useful formula for the series $F_{\lambda,\eta}(\fX).$

\begin{prop}\label{restriction character formula prop}
For $\lambda \in \Par,$
    $F_{\lambda,\eta}(\fX) = \left\langle \Omega\left( \sum_{c \in G^*} \zeta_c^{-1} X_{c} \Omega_{c,\eta}(Y) \right), s_{\lambda}(Y) \right\rangle .$
\end{prop}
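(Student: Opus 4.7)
The plan is to combine Lemma \ref{substitution lem} applied with $f = s_\lambda$ together with Proposition \ref{plethystic kernel formula prop}. The substitution lemma gives the identity $s_\lambda(\Xi_\rho) = \langle g_\rho(Y), s_\lambda(Y)\rangle$ for every $\rho \in \Phi$, which converts the evaluation of a Schur function at the eigenvalues of $\gamma \in C_\rho$ into a Hall pairing in an auxiliary variable set $Y$. This is exactly the bridge needed, since the generating series in Proposition \ref{plethystic kernel formula prop} packages the $g_\rho(Y)$ weighted by $P_\rho(\fX)/Z_\rho$.

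More concretely, I would start from the definition
\[
F_\lambda(\fX) = \sum_{\rho \in \Phi} \frac{P_\rho(\fX)\, s_\lambda(\Xi_\rho)}{Z_\rho},
\]
substitute $s_\lambda(\Xi_\rho) = \langle g_\rho(Y), s_\lambda(Y)\rangle$ from Lemma \ref{substitution lem}, and then interchange the sum over $\rho$ with the Hall pairing, which is $\mathbb{C}$-bilinear and acts only on the $Y$ variable set while $P_\rho(\fX)/Z_\rho$ depends only on $\fX$. This yields
\[
F_\lambda(\fX) = \left\langle \sum_{\rho \in \Phi} \frac{P_\rho(\fX)\, g_\rho(Y)}{Z_\rho},\, s_\lambda(Y)\right\rangle.
\]
An application of Proposition \ref{plethystic kernel formula prop} to the inner series immediately delivers the claimed formula.

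The only point requiring any care is the legitimacy of pulling the infinite sum out of the Hall pairing. I would address this by noting that the Hall pairing is computed coefficient-wise in the power-sum basis, and the series $\sum_\rho P_\rho(\fX) g_\rho(Y)/Z_\rho$ has well-defined $p_\lambda(Y)$-coefficients in the ring of symmetric functions in $\fX$; pairing against the fixed symmetric function $s_\lambda(Y)$ then reduces to finitely many nonzero terms at each homogeneous degree in $Y$, so the interchange is purely formal. No other estimates or combinatorial manipulations are required, which makes this step the short closing bookkeeping argument of the section.
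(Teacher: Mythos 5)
Your proposal is correct and follows exactly the paper's argument: apply Lemma \ref{substitution lem} with $f = s_\lambda$, pull the sum over $\rho$ inside the Hall pairing in the $Y$ variables, and invoke Proposition \ref{plethystic kernel formula prop}. The extra remark justifying the formal interchange of the infinite sum with the pairing is a harmless (and reasonable) addition that the paper leaves implicit.
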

\begin{proof}
Using Lemma \ref{substitution lem}, we see that
$$F_{\lambda,\eta}(\fX) = \sum_{\rho \in \Phi} \frac{s_{\lambda}(\Xi_{\rho,\eta})}{Z_{\rho}} P_{\rho}(\fX) = \sum_{\rho \in \Phi} \frac{\langle g_{\rho,\eta}(Y), s_{\lambda}(Y) \rangle}{Z_{\rho}} P_{\rho}(\fX)$$ which we may write as 
$F_{\lambda,\eta}(\fX) =  \left\langle \sum_{\rho \in \Phi} \frac{P_{\rho}(\fX)g_{\rho,\eta}(Y)}{Z_{\rho}} , s_{\lambda}(Y) \right\rangle.$ From Proposition \ref{plethystic kernel formula prop}, we find
$$\left\langle \sum_{\rho \in \Phi} \frac{P_{\rho}(\fX)g_{\rho,\eta}(Y)}{Z_{\rho}} , s_{\lambda}(Y) \right\rangle = \left\langle \Omega\left( \sum_{c \in G^*} \zeta_c^{-1} X_{c} \Omega_{c,\eta}(Y) \right) , s_{\lambda}(Y) \right\rangle.$$

\end{proof}

We introduce the following notation:

\begin{defn}\label{G def}
For $\rho \in \Psi$, define
    $G_{\rho,\eta}(Y):= \left( \Omega\left( \sum_{c \in G^*} \zeta_c^{-1} X_{c} \Omega_{c,\eta}(Y)   \right) , S_{\rho}(\fX) \right).$
\end{defn}

The utility of defining the series $G_{\rho,\eta}(Y)$ is demonstrated by the next formula.

\begin{lem}\label{branching coeff lem 2}
For all $\rho \in \Psi_n$ and $\lambda \in \Par$ with $\ell(\lambda) \leq nm$,
    $d_{\rho,\lambda}^{\eta} = \langle G_{\rho,\eta}(Y), s_{\lambda}(Y) \rangle.$
\end{lem}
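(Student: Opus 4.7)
The plan is to combine Proposition \ref{formula for branching coeff}, Proposition \ref{restriction character formula prop}, and Definition \ref{G def} via a Fubini-type swap of two independent bilinear pairings, one in the $\fX$-variables and one in the $Y$-variables.

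First, by Proposition \ref{formula for branching coeff}, $d_{\rho,\lambda} = (F_\lambda^{(n)}(\fX), \overline{S_\rho(\fX)})$. Since $\rho \in \Phi_n$, the element $S_\rho(\fX)$ is homogeneous of total degree $n$ in the $\fX$-variables, and the $R$-pairing vanishes on components of different total degree (the basis $P_\gamma(\fX)$ is graded and orthogonal). Consequently, replacing $F_\lambda^{(n)}(\fX)$ by the full series $F_\lambda(\fX)$ does not change the value of the pairing, so $d_{\rho,\lambda} = (F_\lambda(\fX), \overline{S_\rho(\fX)})$.

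Next, I would substitute the formula from Proposition \ref{restriction character formula prop}, which expresses $F_\lambda(\fX)$ as the Hall pairing in the $Y$-variables of $s_\lambda(Y)$ against the kernel $\Omega\!\left(\frac{1}{m}\sum_{\zeta \in \mu_m} X_\zeta \Omega(Y;\zeta)\right)$. This gives
$$d_{\rho,\lambda} = \left( \Big\langle \Omega\!\left(\tfrac{1}{m}\textstyle\sum_{\zeta \in \mu_m} X_\zeta \Omega(Y;\zeta)\right), s_\lambda(Y) \Big\rangle,\; \overline{S_\rho(\fX)} \right).$$
The key move is then to interchange the two pairings: the Hall pairing $\langle -,- \rangle$ acts only on $Y$-variables while $(-,-)$ acts only on the $\fX$-variables, so formally each pairing is a continuous $\mathbb{C}$-linear functional on its own factor, and the exchange is justified by expanding the kernel in the tensor-product basis $P_\gamma(\fX)\, p_\nu(Y)$ and summing term-by-term. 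After the swap, the inner $\fX$-pairing produces exactly $G_\rho(Y)$ by Definition \ref{G def}, yielding $d_{\rho,\lambda} = \langle G_\rho(Y), s_\lambda(Y) \rangle$.

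The only subtlety worth checking carefully is the interchange of pairings and the degree-truncation step; both are routine but deserve a sentence of justification since the generating series involved are infinite. The truncation is immediate from homogeneity, and the swap reduces to the identity $(\langle A(\fX,Y),B(Y)\rangle_Y, C(\fX))_{\fX} = \langle (A(\fX,Y),C(\fX))_{\fX}, B(Y)\rangle_Y$ applied to $A = \Omega(\frac{1}{m}\sum_\zeta X_\zeta \Omega(Y;\zeta))$, $B = s_\lambda(Y)$, and $C = \overline{S_\rho(\fX)}$, which holds termwise on the respective orthogonal bases.
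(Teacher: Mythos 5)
Your proposal is correct and follows essentially the same route as the paper's proof: truncation via homogeneity to pass from $F_\lambda^{(n)}$ to $F_\lambda$, substitution of Proposition \ref{restriction character formula prop}, and the interchange of the two pairings to recover $G_\rho(Y)$ from Definition \ref{G def}. Your extra remarks justifying the swap termwise on the orthogonal bases are a welcome elaboration of a step the paper performs without comment.
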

\begin{proof}
    From Proposition \ref{formula for branching coeff}, we know that 
    $d_{\rho,\lambda}^{\eta} = \left( F_{\lambda,\eta}^{(n)}(\fX), S_{\rho}(\fX) \right).$ Note that since $|\rho| = n,$ we may exchange $F_{\lambda,\eta}^{(n)}(\fX)$ with $F_{\lambda,\eta}(\fX)$ in the above to find 
    $d_{\rho,\lambda}^{\eta} = \left( F_{\lambda,\eta}(\fX), S_{\rho}(\fX) \right).$ Proposition \ref{restriction character formula prop} now shows 
    $$\left( F_{\lambda,\eta}(\fX), S_{\rho}(\fX) \right) = \left( \left\langle \Omega\left( \sum_{c \in G^*} \zeta_c^{-1} X_{c} \Omega_{c,\eta}(Y)   \right), s_{\lambda}(Y) \right\rangle, S_{\rho}(\fX) \right).$$ Swapping the order of the pairing evaluations yields
    $$\left( \left\langle \Omega\left( \sum_{c \in G^*} \zeta_c^{-1} X_{c} \Omega_{c,\eta}(Y)   \right), s_{\lambda}(Y) \right\rangle, S_{\rho}(\fX) \right) = \left\langle \left( \Omega\left( \sum_{c \in G^*} \zeta_c^{-1} X_{c} \Omega_{c,\eta}(Y)   \right), S_{\rho}(\fX)  \right), s_{\lambda}(Y) \right\rangle$$ and thus
    $ d_{\rho,\lambda}^{\eta} = \left\langle G_{\rho,\eta}(Y) , s_{\lambda}(Y)   \right\rangle.$
\end{proof}

In what follows, let $\mathbb{S}_{\mu}(\eta)$ denote the \textbf{\textit{Schur functor}} for $\mu \in \Par$ applied to the representation $\eta$ of $G.$ Explicitly, 
$\mathbb{S}_{\mu}(V) = S^{\mu} \otimes_{\mathfrak{S}_{|\mu|}} V^{\otimes |\mu|}$ for all finite dimensional $\mathbb{C}$ vector spaces $V$ and we define $\mathbb{S}_{\mu}(\eta)$ accordingly.

\begin{lem}\label{irrep alphabet tranform lem}
For all $\gamma \in G_*,$
    $$\overline{\varphi_{\gamma}}\left( \Omega_{c,\eta}(Y) | c \in G_* \right) =  \sum_{\mu \in \Par} \dim \Hom_{G}(\gamma, \mathbb{S}_{\mu}(\eta)) s_{\mu}(Y).$$
\end{lem}
\begin{proof}
    By direct computation, we find
    \begin{align*}
        \overline{\varphi_{\gamma}}\left( \Omega_{c,\eta}(Y) | c \in G_* \right) &= \sum_{c \in G_*} \zeta_c^{-1} \overline{\chi^{\gamma}(c)} \Omega_{c,\eta}(Y)\\
        &= \sum_{c \in G^*} \zeta_c^{-1} \overline{\chi^{\gamma}(c)} \sum_{\mu \in \Par} s_{\mu}(\Xi_{c,\eta} ) s_{\mu}(Y) \\
        &= \sum_{\mu \in \Par} s_{\mu}(Y) \sum_{c \in G^*} \zeta_c^{-1} \overline{\chi^{\gamma}(c)} s_{\mu}(\Xi_{c,\eta})\\
    \end{align*}
and by Schur-Weyl duality, $s_{\mu}(\Xi_{c,\eta})$ is the character value of the representation $\mathbb{S}_{\mu}(\eta)$ at the conjugacy class $c \in G^*.$ Importantly, we have the character orthogonality relation $$ \sum_{c \in G^*} \zeta_c^{-1} \overline{\chi(c)} \psi(c) = \begin{cases}
    1 & \chi \cong \psi\\
    0 & \chi \ncong \psi \\
\end{cases}$$ for all irreducible characters $\chi$ and $\psi$ of $G$ and thus
$$\sum_{c \in G^*} \zeta_c^{-1} \overline{\chi^{\gamma}(c)} s_{\mu}(\Xi_{c,\eta}) = \dim \Hom_{G}(\gamma, \mathbb{S}_{\mu}(\eta)).$$ The result follows.
\end{proof}

We require the reproducing kernel property of the pairing $(-,-).$

\begin{lem}\label{wreath kernel lem}
For any collection of variable sets $\mathfrak{Y} = (Y_{c})_{c \in G^*}$ and $f \in R,$
    $$\left( \Omega \left( \sum_{c \in G^*} \zeta_c^{-1} X_{c}Y_{c} \right), f(\fX) \right) = \overline{f}(\mathfrak{Y}).$$
\end{lem}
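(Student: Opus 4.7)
The plan is to expand the plethystic kernel explicitly in the $P_{\rho}(\fX)$-basis of $R$ and then apply the orthogonality relation $(P_{\rho},P_{\gamma}) = \delta_{\rho,\gamma}Z_{\rho}$ to each summand. This is the standard Cauchy-kernel strategy, adapted to the wreath pairing, and the entire argument reduces to two bookkeeping steps.

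First, I would unwind the plethystic exponential. Writing $\Omega = \exp\left(\sum_{r\geq 1} p_r/r\right)$ and using the definition of plethystic substitution on the series $A = \frac{1}{m}\sum_{\zeta\in\mu_m} X_{\zeta} Y_{\zeta} = \frac{1}{m}\sum_{\zeta,i,j} x_{\zeta,i}y_{\zeta,j}$, the rule $p_r(A) = \sum_{\alpha} c_{\alpha} t^{r\alpha}$ gives $p_r(A) = \frac{1}{m}\sum_{\zeta\in\mu_m} p_r(X_{\zeta})p_r(Y_{\zeta})$. Thus
$$\Omega\left(\tfrac{1}{m}\sum_{\zeta}X_{\zeta}Y_{\zeta}\right) = \prod_{\zeta\in\mu_m}\exp\left(\sum_{r\geq 1}\frac{p_r(X_{\zeta})p_r(Y_{\zeta})}{rm}\right) = \prod_{\zeta\in\mu_m}\sum_{\lambda\in\Par}\frac{p_{\lambda}(X_{\zeta})p_{\lambda}(Y_{\zeta})}{z_{\lambda}m^{\ell(\lambda)}},$$
where the last equality is the usual expansion of $\exp$ over partitions. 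Collecting the product of sums over $\zeta$ as a single sum indexed by $\rho \in \Phi$, and recalling $P_{\rho}(\fX)=\prod_{\zeta}p_{\rho(\zeta)}(X_{\zeta})$ and $Z_{\rho}=\prod_{\zeta}z_{\rho(\zeta)}m^{\ell(\rho(\zeta))}$, this identifies
$$\Omega\left(\tfrac{1}{m}\sum_{\zeta}X_{\zeta}Y_{\zeta}\right) = \sum_{\rho\in\Phi}\frac{P_{\rho}(\fX)\,P_{\rho}(\mathfrak{Y})}{Z_{\rho}}.$$

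Second, since $\{P_{\gamma}(\fX)\}_{\gamma \in \Phi}$ is a basis of $R$, I would write any $f \in R$ as $f(\fX) = \sum_{\gamma}a_{\gamma}P_{\gamma}(\fX)$ and then, using the bilinearity of $(-,-)$ in the $\fX$-variables and the orthogonality $(P_{\rho},P_{\gamma}) = \delta_{\rho,\gamma}Z_{\rho}$, compute
$$\left(\sum_{\rho}\frac{P_{\rho}(\fX)P_{\rho}(\mathfrak{Y})}{Z_{\rho}},\,f(\fX)\right) = \sum_{\rho,\gamma}\frac{P_{\rho}(\mathfrak{Y})\,a_{\gamma}}{Z_{\rho}}(P_{\rho},P_{\gamma}) = \sum_{\rho}a_{\rho}P_{\rho}(\mathfrak{Y}) = f(\mathfrak{Y}).$$

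There is no serious obstacle; the only substantive step is the plethystic computation of $p_r\left(\frac{1}{m}\sum_{\zeta}X_{\zeta}Y_{\zeta}\right)$, which follows immediately from the monomial-level definition of plethysm once one views the coefficients $\frac{1}{m}$ as scalars unaffected by $p_r$. Everything else is bilinearity and the definition of the pairing.
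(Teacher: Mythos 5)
Your proof is correct, and it reaches the reproducing-kernel property by a slightly different route than the paper. The paper cites Macdonald (Appendix B, 7.4) for the dual-basis expansion $\Omega\bigl(\tfrac{1}{m}\sum_{\zeta}X_{\zeta}Y_{\zeta}\bigr)=\sum_{\rho}\overline{S_{\rho}(\fX)}\,S_{\rho}(\mathfrak{Y})$ and then expands $f$ in the $S$-basis, using $(S_{\rho},\overline{S_{\gamma}})=\delta_{\rho,\gamma}$. You instead derive the Cauchy-type expansion $\sum_{\rho}P_{\rho}(\fX)P_{\rho}(\mathfrak{Y})/Z_{\rho}$ from scratch via the plethystic exponential and work in the $P$-basis, where the orthogonality $(P_{\rho},P_{\gamma})=\delta_{\rho,\gamma}Z_{\rho}$ is the very definition of the pairing. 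The structure of the two arguments is the same (a kernel written as a sum of products of dual bases reproduces any $f$), but yours is more self-contained: it replaces the external citation with a short direct computation, and your key observation that $p_r$ fixes the scalar coefficient $\tfrac{1}{m}$ while raising monomials to the $r$-th power is exactly right under the paper's definition of plethysm. The paper's version buys brevity and makes visible the connection to the $S$-basis (which is what the lemma is actually applied to later, in Proposition \ref{plethystic formula for irrep sub prop}); yours buys independence from the reference. Both are complete.
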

\begin{proof}
    From Macdonald \cite{Macdonald} [Appendix B, 7.4, pg. 174], we know
    $$\Omega \left( \sum_{c \in G^*} \zeta_c^{-1} X_{c}Y_{c} \right) = \sum_{\rho \in \Psi} S_{\rho}(\fX) \overline{S_{\rho}}(\mathfrak{Y})$$ and $(S_{\rho}(\fX),S_{\gamma}(\mathfrak{X})) = \delta_{\rho,\gamma}$ for all $\rho,\gamma \in \Psi.$ If we assume $f(\fX) = \sum_{\rho} a_{\rho} S_{\rho}(\fX)$, then 
    $$\left( \Omega \left( \sum_{c\in G^*} \zeta_c^{-1} X_{c}Y_{c} \right), f(\fX) \right) = \sum_{\rho} \overline{a_{\rho}}\left( \Omega \left( \sum_{c \in G^*} \zeta_c^{-1} X_{c}Y_{c} \right),  S_{\rho}(\fX) \right) = \sum_{\rho} \overline{a_{\rho}} \overline{S_{\rho}}(\mathfrak{Y}) = \overline{f}(\mathfrak{Y}).$$ 
\end{proof}

Using the above reproducing kernel property, we find an explicit formula for the series $G_{\rho,\eta}(Y).$ 

\begin{prop}\label{plethystic formula for irrep sub prop}
For all $\rho \in \Psi,$
    $G_{\rho,\eta}(Y) = \prod_{\gamma \in G_*} s_{\rho(\gamma)}\left(\sum_{\mu \in \Par} \dim \Hom_{G}(\gamma, \mathbb{S}_{\mu}(\eta)) s_{\mu}(Y)\right)$
\end{prop}
\begin{proof}
    Applying Lemma \ref{wreath kernel lem} to Definition \ref{G def} shows
    \begin{align*}
        G_{\rho,\eta}(Y) &= \left( \Omega\left( \sum_{c \in G^*} \zeta_c^{-1} X_{\zeta} \Omega_{c,\eta}(Y)   \right) , S_{\rho}(\fX) \right) \\
        &= \left( \Omega\left( \sum_{c \in G^*} \zeta_c^{-1} X_{c} \Omega_{c,\eta}(Y)   \right) , \prod_{\gamma \in G_*} s_{\rho(\gamma)}(\varphi_{\gamma}(\fX)) \right)\\
        &= \prod_{\gamma\in G_*} s_{\rho(\gamma)}\left(\overline{\varphi_{\gamma}}\left( \Omega_{c,\eta}(Y) | c \in G_* \right)\right)\\
        &= \prod_{\gamma \in G_*} s_{\rho(\gamma)}\left(\sum_{\mu \in \Par} \dim \Hom_{G}(\gamma, \mathbb{S}_{\mu}(\eta)) s_{\mu}(Y)\right).\\
    \end{align*}
    
\end{proof}

By combining Lemma \ref{branching coeff lem 2} and Proposition \ref{plethystic formula for irrep sub prop} we find the following:

\begin{thm}\label{main theorem}
For $\rho \in \Psi_n$ and $\lambda \in \Par$ with $\ell(\lambda) \leq n m ,$
    $$\mathrm{dim}_{\mathbb{C}} \mathrm{Hom}_{\mathfrak{S}_{n}(G)}\left( W_{\rho}, \eta^{(n)}_*\mathrm{Res}^{\mathrm{GL}_{nm}(\mathbb{C})}_{\mathfrak{S}_n(\mathrm{U}(m))} V^{\lambda} \right)= \left\langle \prod_{\gamma \in G_*} s_{\rho(\gamma)}\left(\sum_{\mu \in \Par} \dim \Hom_{G}(\gamma, \mathbb{S}_{\mu}(\eta)) s_{\mu}\right), ~ s_{\lambda} \right\rangle.$$
\end{thm}

\subsection{Cyclic case}

Let $m \geq 1$ and fix $\zeta_m:= e^{\frac{2 \pi i}{m}}$ to be a primitive complex $m$-th root of unity.
Here consider the case $G = \mu_m = \{\zeta_m^{j} \mid 0 \leq j \leq m-1\}$, the group of complex $m$-th roots of unity. Take $\eta:\mu_m \rightarrow U(1)$ to be the identity homomorphism $\eta = \mathrm{Id}$. In this case, for all $n \geq 1$ and $\ell(\lambda) \leq n,$ $\eta_{*}^{(n)} \mathrm{Res}_{\mathfrak{S}_n(U(1))}^{\mathrm{GL}_{n}(\mathbb{C})} V^{\lambda} = \mathrm{Res}_{\mathfrak{S}_n(\mu_m)}^{\mathrm{GL}_{n}(\mathbb{C})} V^{\lambda}.$ The irreducible representations $\rho$ of $\mathfrak{S}_n(\mu_m)$ are indexed by maps $\rho: \mu_m \rightarrow \Par$ with $|\rho| = \sum_{0 \leq j \leq m-1} | \rho(\zeta_m^{j})| = n.$ For $0 \leq j \leq m-1,$ let $\gamma_j : \mu_m \rightarrow U(1)$ denote the character $\gamma_j(\zeta):= \zeta^{j}$ for $\zeta \in \mu_m.$ As a consequence of Theorem \ref{main theorem}, we find the following. 

\begin{cor}\label{cyclic cor}
For $\rho: \mu_m \rightarrow \Par$ and $\lambda \in \Par$ with $\ell(\lambda) \leq |\rho| = n,$
    $$\mathrm{dim}_{\mathbb{C}} \mathrm{Hom}_{\mathfrak{S}_{n}(\mu_m)}\left( W_{\rho}, \mathrm{Res}^{\mathrm{GL}_n(\mathbb{C})}_{\mathfrak{S}_n(\mu_m)} V^{\lambda} \right) = \left\langle \prod_{0\leq j \leq m-1} s_{\rho(\zeta_m^j)}\left( \sum_{k \geq 0} h_{km+j}\right), ~ s_{\lambda} \right\rangle.$$
\end{cor}
\begin{proof}
    Theorem \ref{main theorem} tells us that 
    $$\mathrm{dim}_{\mathbb{C}} \mathrm{Hom}_{\mathfrak{S}_{n}(\mu_m)}\left( W_{\rho}, \mathrm{Res}^{\mathrm{GL}_n(\mathbb{C})}_{\mathfrak{S}_n(\mu_m)} V^{\lambda} \right) = \left\langle \prod_{0 \leq j \leq m-1} s_{\rho(\zeta_m^j)}\left(\sum_{\mu \in \Par} \dim \Hom_{\mu_m}(\gamma_j, \mathbb{S}_{\mu}(\gamma_1)) s_{\mu}\right), ~ s_{\lambda} \right\rangle.$$ Since $\gamma_1$ is $1$-dimensional, $\mathbb{S}_{\mu}(\gamma_1) = 0$ for $\ell(\mu) > 1$ and otherwise, $\mathbb{S}_{(n)}(\gamma_1) = \gamma_1^{\otimes n} = \gamma_a$ for $n \geq 0$ where $a$ is the residue of $n$ modulo $m.$ Therefore, $$\dim \Hom_{\mu_m}(\gamma_j, \mathbb{S}_{\mu}(\gamma_1)) = \begin{cases}
        1 & \mu = (n), m \mid n-j\\
        0 & \text{otherwise}\\
    \end{cases}$$ and as such 
    $$\sum_{\mu \in \Par} \dim \Hom_{\mu_m}(\gamma_j, \mathbb{S}_{\mu}(\gamma_1)) s_{\mu} = \sum_{k \geq 0} h_{km+j}.$$ Thus, the formula holds.
\end{proof}

\printbibliography

\end{document}